\documentclass[12pt,oneside,reqno]{amsart}
\usepackage{xcolor}
\usepackage{hyperref}

\usepackage{geometry}
\usepackage{amssymb,amsmath,amsthm}
\numberwithin{equation}{section}
\usepackage[english]{babel}
\usepackage{textcomp}
\usepackage{epstopdf}
\usepackage{inputenc}
\usepackage{graphicx}
\newcommand{\Aut}{\operatorname{Aut}}
\newcommand\numberthis{\addtocounter{equation}{1}\tag{\theequation}}

\definecolor{armygreen}{rgb}{0.29, 0.33, 0.13}
\definecolor{darkgreen}{rgb}{0.0, 0.2, 0.13}

\DeclareMathOperator{\Ran}{Ran}
\DeclareMathOperator{\Ker}{Ker}

\newtheorem{thm}{Theorem}[section]
\newtheorem{lem}[thm]{Lemma}

\newtheorem{Def}[thm]{Definition}
\newtheorem{prop}[thm]{Proposition}
\newtheorem{rem}[thm]{Remark}

\newcommand{\Rem}{\begin{rem} \rm}
	\newcommand{\bdfn}{\begin{Def} \rm}
		\newcommand{\edfn}{\end{Def}}

	\newcommand{\ba}{\begin{array}}
		\newcommand{\ea}{\end{array}}

\begin{document}
		\title[Generalized tri-circular projections on some vector-valued spaces]{Generalized tri-circular projections on some vector-valued spaces of analytic functions}
		
\author{Hemant Kumar}
\address{Hemant Kumar, Department of Applied Sciences, Indian Institute of Information Technology Allahabad, Prayagraj-211015, U.P., India.}
\email{hemantkr.math@gmail.com}

\author{Himanshu Kumar}
\address{Himanshu Kumar, Department of Applied Sciences, Indian Institute of Information Technology Allahabad, Prayagraj-211015, U.P., India.}
\email{himanshumath1507@gmail.com}

\author{Abdullah Bin Abu Baker}
\address{Abdullah Bin Abu Baker, Department of Applied Sciences, Indian Institute of Information Technology Allahabad, Prayagraj-211015, U.P., India.}
\email{abdullahmath@gmail.com}

\thanks{The first- and second-named authors gratefully acknowledge the Ministry of Education, New Delhi (India), for financial support, and IIIT Allahabad, India, for providing the resources and infrastructure to carry out this research.}

		\subjclass[2020]{46B04, 46E15, 47B01}
		
		\keywords{Analytic function, Vector-valued Hardy space, Vector-valued little Bloch space, Generalized tri-circular projection, Isometry}
		
		\date{\today}
		
\begin{abstract}
In this paper, we characterize generalized tri-circular projections on the following three vector-valued spaces of analytic functions on the open unit disk $\mathbb{D}$: the Hardy space $H^p(\mathcal{K})$ for a complex separable Hilbert space $\mathcal{K}$ and $1 \le p < \infty,p \neq 2$; the space $H^\infty(E)$ of $E$-valued bounded analytic functions for a Banach space $E$ with trivial multiplier algebra; and the little Bloch spaces $\mathcal{B}_0(\mathbb{D}, E)$ and $\mathcal{B}_{\ast}(\mathbb{D}, E)$, for $E$ smooth, strictly convex, and reflexive.			
\end{abstract}
		
\maketitle
		
\thispagestyle{empty}
\section{Introduction}
A projection on a Banach space $X$ is a bounded linear operator $P$ such that $P^2 = P$. A projection $P$ is called \emph{contractive} (respectively, \emph{bi-contractive}) if $\|P\|= 1$ (respectively, $\|P\|= 1, \|I-P\|= 1$). Contractive projections are interesting in their own right, well beyond the obvious fact that they decompose a space into complemented subspaces. On spaces of analytic functions, the range of a contractive projection is often required to be a very rigid, geometrically distinguished subspace, and characterization of the subspaces arising in this way has been a recurring theme since the classical work on $L^p$ and $H^p$ spaces. A convenient way to obtain such projections is to average a surjective isometry (or a finite family of them) over the roots of unity or other unimodular weights, and this is what we do in this paper.

The simplest instance of this idea is a \emph{circular projection}: given a surjective linear isometry $T$ of order $n$ (i.e., $T^n=I$) on $X$, the operator
\[
P = \frac{1}{n}\sum_{k=0}^{n-1} T^k
\]
is a contractive projection onto the fixed-point subspace of $T$. Such circular projections and the associated ``$n$-circular'' decompositions of $X$ into eigenspaces of $T$ go back to classical results on isometry groups of function spaces.

Let $\mathbb{D}$ be the open unit disk and let $\mathbb{T}$ be the boundary of this in the complex plane $\mathbb{C}$. The variant we consider here is a more flexible one, in which the single averaging operator is replaced by a convex combination with genuinely different unimodular coefficients. Given a surjective isometry $T$ with $T^n = I$ and \emph{distinct} scalars $\lambda_1,\dots,\lambda_{n-1}\in \mathbb{T} \setminus\{1\}$, one can still recover the spectral projections of $T$ corresponding to the eigenvalues $1,\lambda_1,\dots,\lambda_{n-1}$ from the polynomial identity $(T-I)(T-\lambda_1 I)\cdots(T-\lambda_{n-1}I)=0$. This is the mechanism behind \emph{generalized $n$-circular projections}: a family $\{P_0, P_1, \dots, P_{n-1}\}$ of nonzero, pairwise distinct contractive projections summing to the identity, arising as the spectral projections of some surjective isometry $T = P_0+\lambda_1 P_1+\cdots+\lambda_{n-1}P_{n-1}$ for some choices of distinct $\lambda_1, \dots, \lambda_{n-1 }\in \mathbb{T} \setminus \{1\}$. For $n=2$, these are the well-studied bi-contractive class of \emph{generalized bi-circular projections}, introduced by Fo{\v{s}}ner, Ili{\v{s}}evi{\'c}, and Li \cite{fovsner2007g}, and since characterized on a wide range of function spaces, see for example,  \cite{botelho2009generalized, botelho2014isometries}. The current work concerns the next case, $n=3$, for some vector-valued spaces of analytic functions on the open unit disk, usually called \emph{generalized tri-circular projections}. For more related results, we refer the reader to \cite{abubaker2016structures, hosseini2022note, hosseini2021projections, ilivsevic2022generalized, lin2008generalized, maurya2024automorphisms}, and the references therein. 

In this article we consider three vector-valued spaces of analytic functions on the open unit disk $\mathbb{D}$: the vector-valued Hardy space $H^p(\mathcal{K})$ for a complex separable Hilbert space $\mathcal{K}$ and $1\le p <\infty$, $p\ne 2$; the vector-valued space $H^\infty(E)$ of bounded analytic functions with values in a Banach space $E$ with trivial multiplier algebra; and the vector-valued little Bloch space $B_0(\mathbb{D}, E)$ (together with its unitization $B_*(\mathbb{D}, E)$), for $E$ smooth, strictly convex, and reflexive. In each case, a classification of the surjective linear isometries is already available in the literature.

The paper is organized as follows. Section \ref{sec:prelim} fixes notation, recalls the definition of a generalized $n$-circular projection and the accompanying factorization Lemma, and states the three isometry-classification Theorems we will need. Section \ref{sec:hardy} treats $H^p(\mathcal{K})$; Section \ref{sec:boundedhardy} treats $H^\infty(E)$; Section \ref{sec:bloch} treats $B_0(\mathbb{D},E)$ and $B_*(\mathbb{D},E)$. We have in all sections the same 3-case splits: $\varphi$ the identity, $\varphi$ of order $2$, and $\varphi$ of order exactly $3$. The case of order $\varphi \geq 4$ is not possible, as it leads to a contradiction. 

\section{Preliminaries}\label{sec:prelim}

Throughout, $\Aut(\mathbb{D})$ is the group of disk automorphisms; every $\varphi \in \Aut(\mathbb{D})$ has order $1$, $2$, $3$, or is of infinite order as a self-map of $\mathbb{D}$ under composition. 

\subsection{Generalized $n$-circular projections}
\begin{Def} \label{def:gncp}
Let $X$ be a Banach space and let $n\ge 2$. A family $\mathcal{C}=\{P_0,P_1,\dots,P_{n-1}\}$ of nonzero, pairwise distinct nonzero projections on $X$ is called a \emph{generalized $n$-circular projection} with respect to a choice of distinct scalars $\lambda_1,\dots,\lambda_{n-1}\in\mathbb{T}\setminus\{1\}$ if
\[
P_0+P_1+\cdots+P_{n-1} = I \qquad\text{and}\qquad T:=P_0+\lambda_1 P_1+\cdots+\lambda_{n-1}P_{n-1}
\]
is a surjective linear isometry of $X$. If $n=2$, we say that $\mathcal C$ is a family of  \emph{generalized bi-circular projections} (in short, $GBPs$); and if $n=3$, a family of \emph{generalized tri-circular projections} (in short, $GTPs$).
\end{Def}

In \cite{vcuka2016generalized}, the authors have shown that the surjectivity of the isometry $T$ is not necessary. The point of Definition \ref{def:gncp} is that it converts a statement about a family of projections into a statement about a single isometry $T$ satisfying a polynomial identity, which is far more tractable given an explicit classification of the isometries of $X$. This conversion, and its converse, is made precise by the following elementary but essential Lemma, which we recall from \cite{abu2016generalized} and use repeatedly in this paper.

\begin{lem}\label{lem:factorization}
Let $X$ be a Banach space, and let $\mathcal{C} = \{P_0,P_1, P_2\}$ be a collection of nonzero distinct projections. Let $\lambda_1, \lambda_2$ be distinct complex numbers not equal to $1$. Then the following conditions are equivalent.
\begin{enumerate}
\item $\mathcal{C}$ is the family of generalized tri-circular projections and $T = P_0 + \lambda_1 P_1 + \lambda_2 P_2$.
\item The following holds: 
$ (T- I)(T -\lambda_1 I)(T - \lambda_2 I) =0 $ and
$$ P_0 = \frac{(T - \lambda_1 I)(T- \lambda_2 I)}{(1 - \lambda_1)(1 - \lambda_2)}, P_1 = \frac{(T - I)(T- \lambda_2 I)}{(\lambda_1 - 1)(\lambda_1 - \lambda_2)}, P_2 = \frac{(T - I)(T- \lambda_1 I)}{(\lambda_2 - 1)(\lambda_2 - \lambda_1)}. $$
\end{enumerate}
\end{lem}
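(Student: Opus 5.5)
The plan is to use the polynomial functional calculus in $T$, with the Lagrange interpolation polynomials at the three nodes $1,\lambda_1,\lambda_2$ doing all the work. Put $\lambda_0:=1$ and
\[
L_0(x)=\frac{(x-\lambda_1)(x-\lambda_2)}{(1-\lambda_1)(1-\lambda_2)},\quad
L_1(x)=\frac{(x-1)(x-\lambda_2)}{(\lambda_1-1)(\lambda_1-\lambda_2)},\quad
L_2(x)=\frac{(x-1)(x-\lambda_1)}{(\lambda_2-1)(\lambda_2-\lambda_1)} .
\]
Then $L_i(\lambda_j)=\delta_{ij}$. The three formulas in (2) say exactly that $P_i=L_i(T)$. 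Write $m(x)=(x-1)(x-\lambda_1)(x-\lambda_2)$. Two interpolation identities will be used repeatedly:
\[
L_0+L_1+L_2\equiv 1,\qquad \lambda_0L_0+\lambda_1L_1+\lambda_2L_2\equiv x .
\]
Both hold because each side is a polynomial of degree at most $2$, and the two sides agree at three distinct points.

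\textbf{(2)$\Rightarrow$(1).} Here I read (2) as saying that $T$ is a surjective isometry with $m(T)=0$; this is the setting in which the lemma is applied later.
\begin{enumerate}
\item Substituting $T$ into the interpolation identities gives $P_0+P_1+P_2=I$ and $P_0+\lambda_1P_1+\lambda_2P_2=T$.
\item For $i\neq j$, the product $L_iL_j$ contains all three linear factors, so $m$ divides it. Hence $P_iP_j=0$.
\item Multiplying $\sum_j P_j=I$ by $P_i$ and using step 2 gives $P_i^2=P_i$, so each $P_i$ is a projection.
\end{enumerate}
Nonzeroness and distinctness are hypotheses. Together with the surjective isometry $T$, this is precisely Definition \ref{def:gncp} for $n=3$.

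\textbf{(1)$\Rightarrow$(2).} Here the key fact needed is mutual orthogonality, $P_iP_j=0$ for $i\neq j$. Given orthogonality, the argument runs as follows.
\begin{enumerate}
\item Compute $T-\lambda_j I=\sum_i(\lambda_i-\lambda_j)P_i$.
\item Expand the product $m(T)=(T-I)(T-\lambda_1I)(T-\lambda_2I)$. Only the diagonal terms $P_i$ survive, and each carries the coefficient $\prod_j(\lambda_i-\lambda_j)$. This coefficient contains the factor $\lambda_i-\lambda_i=0$, so $m(T)=0$.
\item In the same way, $L_k(T)=\sum_i L_k(\lambda_i)P_i=P_k$, which is exactly the displayed formula.
\end{enumerate}

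I expect the main obstacle to be this orthogonality. For two idempotents, $P_0+P_1=I$ forces $P_0P_1=0$. For three idempotents summing to $I$ on an infinite-dimensional space, this is not automatic. My first approach is to read ``family of generalized tri-circular projections'' as it is used in \cite{abu2016generalized}, where the $P_i$ are the spectral projections of $T$ and hence mutually annihilating. Under that reading, the implication is the routine computation above.

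If one wants to avoid that reading, I would try to derive orthogonality from the relations $\sum P_i=I$ and $P_i^2=P_i$ alone. Squaring the identity $P_1+P_2=I-P_0$ gives $P_1P_2+P_2P_1=0$. Multiplying this on the left and on the right by $P_1$ then gives $P_1P_2P_1=0$ and $P_1P_2=P_2P_1P_2$. From these one hopes to conclude $P_1P_2=0$, and analogously for the other pairs.
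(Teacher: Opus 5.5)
The paper gives no proof of this lemma; it only recalls it from \cite{abu2016generalized}. Your (2)$\Rightarrow$(1) argument via Lagrange interpolation is fine. So is your computation in (1)$\Rightarrow$(2), once $P_iP_j=0$ for $i\neq j$ is available.

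The gap is exactly the step you flag as the main obstacle. Definition~\ref{def:gncp} in this paper only requires $P_0+P_1+P_2=I$, so (1)$\Rightarrow$(2) needs $P_iP_j=0$ to be derived, and your proposal never derives it. Your first fallback simply adds it to the hypotheses; saying the $P_i$ are ``the spectral projections of $T$'' presupposes what is to be shown. Your second fallback stops at ``one hopes to conclude''. Your belief that orthogonality is not automatic for three idempotents summing to $I$ in infinite dimensions is also mistaken. It is a purely algebraic fact, valid in any algebra in which $2$ is invertible, with no dimension argument involved.

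The missing step is short. For $\{i,j,k\}=\{0,1,2\}$, the operator $P_i+P_j=I-P_k$ is idempotent, so squaring gives $P_iP_j+P_jP_i=0$. Multiplying on the left by $P_i$ gives $P_iP_j+P_iP_jP_i=0$. Multiplying that on the right by $P_i$ gives $2P_iP_jP_i=0$, hence $P_iP_j=-P_iP_jP_i=0$, and by symmetry $P_jP_i=0$. Your sketch was one multiplication away from this. It also contains a sign slip: right multiplication by $P_2$ gives $P_1P_2=-P_2P_1P_2$, not $P_1P_2=P_2P_1P_2$.

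There is one smaller omission in (2)$\Rightarrow$(1). The lemma only assumes $\lambda_i\neq 1$, whereas Definition~\ref{def:gncp} requires $\lambda_i\in\mathbb{T}$. This follows because $TP_i=\lambda_iP_i$ and $T$ is an isometry with $P_i\neq 0$, so $|\lambda_i|=1$. With these two additions your argument is complete.
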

An operator $P$ on a Hilbert space $\mathcal{H}$ is called orthogonal if $\Ran(P) \perp \Ker (P)$, where $\Ran(P)$ and $\Ker (P)$ denote the range space and kernel space of $P$, respectively. We now prove that $GTP$ on a Hilbert space is nothing but the orthogonal projection. To prove this, we need the following lemma.  
\begin{lem}\label{lem:kernel}
Let $\mathcal H$ be a Hilbert space and let $P_0,P_1,P_2$ be projections on $\mathcal H$ satisfying $P_0+P_1+P_2=I$ and $P_iP_j=0$ for $i\neq j$. Then for each $i\in\{0,1,2\}$,
\[
\Ker(P_i)=\bigoplus_{j\neq i}\Ran(P_j).
\]
\end{lem}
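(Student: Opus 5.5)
We prove the two inclusions $\bigoplus_{j\neq i}\Ran(P_j)\subseteq \Ker(P_i)$ and $\Ker(P_i)\subseteq \bigoplus_{j\neq i}\Ran(P_j)$, and then check that the sum on the right is direct. Only the algebraic identities $P_0+P_1+P_2=I$, $P_iP_j=0$ for $i\neq j$, and $P_j^2=P_j$ are needed. The Hilbert space structure plays no role here; it matters only in the subsequent application to orthogonality. Fix $i\in\{0,1,2\}$ and let $j,k$ denote the two remaining indices.

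For the first inclusion, take $x=P_jy+P_kz$. Then
\[
P_ix=P_iP_jy+P_iP_kz=0,
\]
so $\Ran(P_j)+\Ran(P_k)\subseteq \Ker(P_i)$.

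For the reverse inclusion, let $x\in\Ker(P_i)$. From $P_0+P_1+P_2=I$ we get
\[
x=P_ix+P_jx+P_kx=P_jx+P_kx\in \Ran(P_j)+\Ran(P_k).
\]

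To show the sum is direct, suppose $w\in\Ran(P_j)\cap\Ran(P_k)$. Write $w=P_ju$, so that $P_jw=P_j^2u=w$. Similarly $w=P_kw$. Hence
\[
w=P_jw=P_jP_kw=0.
\]
Therefore $\Ker(P_i)=\Ran(P_j)\oplus\Ran(P_k)$. If the symbol $\bigoplus$ is intended in the orthogonal sense, orthogonality of the summands is not asserted at this stage; it is exactly what the subsequent theorem is meant to establish.

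No step here is a genuine obstacle. The only point needing care is using idempotence, $P_jw=w$ for $w\in\Ran(P_j)$, in the directness argument.
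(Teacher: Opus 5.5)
Your proof is correct and takes essentially the same approach as the paper. Both inclusions follow from $P_iP_j=0$ and $P_0+P_1+P_2=I$, and directness follows by applying $P_j$ to an element of $\Ran(P_j)\cap\Ran(P_k)$, using idempotence and $P_jP_k=0$; your remark that $\bigoplus$ here is only an algebraic direct sum, with orthogonality established later in Proposition~\ref{prop:gtp-hermitian}, is accurate.
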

\begin{proof}
For $j\neq i$, $P_iP_j=0$ gives $\Ran(P_j)\subseteq\Ker(P_i)$. Conversely, if $v\in\Ker(P_i)$, then $v=P_0v+P_1v+P_2v$, and since $P_iv=0$ the $i$-th summand vanishes, so
\[
v=\sum_{j\neq i}P_jv\in\sum_{j\neq i}\Ran(P_j).
\]
The sum is direct because if we take $x \in \Ran(P_j) \cap \Ran (P_k)$ for $j \neq k$, then $x=P_j s = P_k t$ for some $s, t \in \mathcal{H}$. This implies $P_j^2s=P_j s=x = P_j P_k t=0$.
\end{proof}
\begin{prop}\label{prop:gtp-hermitian}
Let $\mathcal H$ be a Hilbert space and let $\mathcal C=\{P_0, P_1, P_2\}$ be a family of non zero distinct projections such that $P_0+ P_1+ P_2=I$ and $P_i P_j=0$ for $i \neq j$. Then $\mathcal{C}$ is a family of generalized tri-circular projections on $\mathcal H$ if and only if $P_i$ is orthogonal for $i=0,1,2$. 
\end{prop}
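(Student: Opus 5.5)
The proof splits into the two directions. Throughout we use that $P_0,P_1,P_2$ are idempotents with $P_iP_j=0$ for $i\neq j$ and $\sum P_i=I$. By Lemma \ref{lem:kernel} this gives the algebraic direct sum
\[
\mathcal H=\Ran(P_0)\oplus\Ran(P_1)\oplus\Ran(P_2), \qquad \Ker(P_i)=\bigoplus_{j\neq i}\Ran(P_j).
\]
So every $x\in\mathcal H$ decomposes uniquely as $x=x_0+x_1+x_2$ with $x_i=P_ix\in\Ran(P_i)$. For any scalars $\lambda_1,\lambda_2$, the operator $T=P_0+\lambda_1P_1+\lambda_2P_2$ acts by $Tx=x_0+\lambda_1x_1+\lambda_2x_2$.

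\emph{Sufficiency.} Assume each $P_i$ is orthogonal, i.e.\ $\Ran(P_i)\perp\Ker(P_i)$. By the kernel description, this means the three ranges $\Ran(P_j)$ are mutually orthogonal. Fix any distinct $\lambda_1,\lambda_2\in\mathbb T\setminus\{1\}$, say $\lambda_1=e^{2\pi i/3}$ and $\lambda_2=e^{4\pi i/3}$. By Pythagoras,
\[
\|Tx\|^2=\|x_0\|^2+|\lambda_1|^2\|x_1\|^2+|\lambda_2|^2\|x_2\|^2=\|x\|^2,
\]
so $T$ is an isometry. It is surjective, since $T^{-1}=P_0+\bar\lambda_1P_1+\bar\lambda_2P_2$ by the relations $P_iP_j=\delta_{ij}P_i$. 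Hence $\mathcal C$ is a generalized tri-circular projection in the sense of Definition \ref{def:gncp}.

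\emph{Necessity.} This is the substantive direction. Assume $T=P_0+\lambda_1P_1+\lambda_2P_2$ is a surjective isometry for some distinct $\lambda_1,\lambda_2\in\mathbb T\setminus\{1\}$. Then $T$ is unitary, so $T^*=T^{-1}$. We again have $T^{-1}=P_0+\bar\lambda_1P_1+\bar\lambda_2P_2$, which one checks by multiplying out with $P_iP_j=\delta_{ij}P_i$. On the other hand, $T^*=P_0^*+\bar\lambda_1P_1^*+\bar\lambda_2P_2^*$. The operators $P_i^*$ are also idempotents with $P_i^*P_j^*=0$ and $\sum P_i^*=I$.

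We now compare spectral projections. By Lemma \ref{lem:factorization} (or directly), each $P_i$ is a polynomial in $T$. Explicitly,
\[
P_0=\frac{(T-\lambda_1I)(T-\lambda_2I)}{(1-\lambda_1)(1-\lambda_2)},
\]
and similarly for $P_1,P_2$. Taking adjoints gives $P_0^*$ as the corresponding polynomial in $T^*$ with conjugated coefficients. Since $T^*=T^{-1}$ has the spectral decomposition with eigenvalues $1,\bar\lambda_1,\bar\lambda_2$ and projections $P_0,P_1,P_2$, that polynomial evaluated at $T^{-1}$ is exactly $P_0$. Hence $P_0^*=P_0$, and likewise $P_i^*=P_i$ for each $i$. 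A self-adjoint idempotent has $\Ran(P_i)=\Ker(P_i)^\perp$, so each $P_i$ is orthogonal.

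The main point to handle carefully is this identification. The cleanest route is to note that $P_0^*$ is the Lagrange-interpolation polynomial $L_0(T^*)$ with conjugated nodes. Because $T^*=T^{-1}$ is diagonal with respect to the decomposition into the $\Ran(P_i)$, with eigenvalue $\bar\lambda_i$ on $\Ran(P_i)$, evaluating $L_0$ there gives $1$ on $\Ran(P_0)$ and $0$ on the other two summands, that is, $P_0$.

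An alternative check of necessity avoids adjoints. Take $x_i\in\Ran(P_i)$ and $x_j\in\Ran(P_j)$ with $i\neq j$, and use $\|T^k(x_i+x_j)\|=\|x_i+x_j\|$ for $k=1,2$. This gives
\[
\operatorname{Re}\bigl(\lambda_i\bar\lambda_j\langle x_i,x_j\rangle\bigr)=\operatorname{Re}\langle x_i,x_j\rangle .
\]
Replacing $x_i$ by $e^{i\theta}x_i$ yields $(\lambda_i\bar\lambda_j-1)\langle x_i,x_j\rangle=0$. Since $\lambda_i\neq\lambda_j$ (with $\lambda_0=1$), we get $\langle x_i,x_j\rangle=0$. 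I would probably present this second argument, as it is shorter and fully elementary.
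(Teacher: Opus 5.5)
Your proof is correct. The sufficiency direction matches the paper's: you supply the Pythagoras computation and the explicit inverse that the paper leaves as ``easily verified''.

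The second necessity argument, the one you say you would present, is essentially the paper's. There one takes $u\in\Ran(P_i)$ and $v\in\Ran(P_j)$, uses $Uu=\lambda_iu$, $Uv=\lambda_jv$ and $\langle Uv,Uu\rangle=\langle v,u\rangle$ to get $(\lambda_j\bar\lambda_i-1)\langle v,u\rangle=0$. It then invokes Lemma \ref{lem:kernel} to pass from pairwise orthogonality of the ranges to $\Ran(P_i)\perp\Ker(P_i)$. Your real-part identity followed by the rotation $x_i\mapsto e^{i\theta}x_i$ is just polarization done by hand. Only the case $k=1$ is needed; the $k=2$ case is superfluous.

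Your first, adjoint-based argument is a genuinely different route. Identifying $P_i^*$ with the conjugated Lagrange polynomial evaluated at $T^*=T^{-1}$ gives $P_i^*=P_i$ as an operator identity in one stroke. Orthogonality then follows from self-adjointness, with no need to assemble pairwise orthogonality of the ranges and then apply the kernel description. This route relies on $T^*=T^{-1}$, which costs nothing here: the relations $P_iP_j=\delta_{ij}P_i$ and $\sum_i P_i=I$ already make $T$ invertible, as you note. The paper's vector-wise argument is more elementary and uses only that $T$ preserves inner products.
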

\begin{proof} ($\Longrightarrow$) Suppose $\mathcal C=\{P_0,P_1,P_2\}$ is a family of $GTPs$ on $\mathcal H$. Then $P_0+P_1+P_2=I$, $P_iP_j=0$ for $i\neq j$, and there exist distinct $\lambda_1,\lambda_2\in\mathbb{T} \setminus \{1\}$,
and a unitary operator $U$ on $\mathcal H$, such that $U=P_0+\lambda_1P_1+\lambda_2P_2$. 

Let $\lambda_0=1$. If $u\in\Ran(P_i)$ then $P_iu=u$ and, by Lemma~\ref{lem:kernel}, $P_ku=0$ for every $k\neq i$. Hence $Uu=\lambda_iu$ for all $u\in\Ran(P_i)$. We claim that $\Ran(P_i)\perp\Ran(P_j)$ for all $i\neq j$. To see this, fix $i\neq j$ and let $u\in\Ran(P_i)$, $v\in\Ran(P_j)$. Since $U$ is unitary, $\langle v,u\rangle=\langle Uv,Uu\rangle=\langle\lambda_jv,\lambda_iu\rangle = \lambda_j\overline{\lambda_i}\,\langle v,u\rangle$. Thus, $(\lambda_j\overline{\lambda_i} - 1)\langle v,u\rangle=0$. It follows that $\langle v,u\rangle=0$. Moreover, it follows from Lemma~\ref{lem:kernel} that $\Ran(P_i) \perp \Ker(P_i)$, $i=0,1,2$. Therefore, $P_i$ is an orthogonal projection. 

($\Longleftarrow$) Suppose $P_i$ is an orthogonal projection for each $i=0,1,2$. This implies that $P_i$ is Hermitian. Choose distinct $\lambda_1, \lambda_2 \in \mathbb{T} \setminus \{1\}$ and define $U = P_0 + \lambda_1 P_1 + \lambda_2 P_2$. We can easily verify that $U$ is unitary. Therefore, $\mathcal{C}$ is a family of generalized tri-circular projections. 
\end{proof}	
If $T$ is a surjective isometry of $X$ which satisfies the cubic identity $(T-I)(T-\lambda_1I)(T-\lambda_2I)=0$, we insert the explicit weighted-composition formula for $T$ (space by space below), and substitute into the cubic identity to obtain an operator Equation, which, evaluated on a small and well-chosen set of test functions, yields algebraic constraints on the disk-automorphism symbol $\varphi$ and the coefficient space operator. 

Let $\mathcal{C}=\{P_0,P_1,P_2\}$ be a family of $GTPs$ on any Banach space $X$ corresponding to a surjective isometry $T$. Then there exist distinct unimodular constants
$\lambda_1,\lambda_2\in\mathbb{T}\setminus\{1\}$ such that
$$
T=P_0+\lambda_1P_1+\lambda_2P_2.
$$
Consequently,
\begin{equation*}\label{eq1}
(T-I)(T-\lambda_1I)(T-\lambda_2I)=0,
\end{equation*}
which expands to
\begin{equation}\label{MainIdentity}
T^3-(1+l)T^2+(l+m)T-mI=0, \quad l=\lambda_1+\lambda_2,\; m=\lambda_1\lambda_2,
\end{equation}
which is the identity with which we work throughout Sections \ref{sec:hardy}--\ref{sec:bloch}.

Now, we recall the definitions of Hardy spaces $H^p(\mathcal{K})$, $H^\infty(E)$, and vector-valued little Bloch spaces $B_0(\mathbb{D},E)$ and $B_*(\mathbb{D},E)$. We also recall the form of surjective linear isometries on each of these spaces. 

\subsection{Vector-valued Hardy space $H^p(\mathcal{K})$}

Let $\mathcal{K}$ be a complex separable Hilbert space and $1\le p<\infty$. We denote by $H^p(\mathcal{K})$ the space of analytic $\mathcal{K}$-valued functions $f:\mathbb{D}\to \mathcal{K}$ with \begin{equation*} \|f\| = \sup_{0<r<1}\left(\frac{1}{2\pi}\int_0^{2\pi}\|f(re^{it})\|_{\mathcal{K}}^p \,dt\right)^{1/p}<\infty. \end{equation*}
For $p\ne 2$, every surjective linear isometry $T: H^p(\mathcal{K})\to H^p(\mathcal{K})$ has the form
\begin{equation}\label{Hp-isom}
(Tf)(z) = (\varphi'(z))^{\frac{1}{p}} U(f(\varphi(z))), \qquad f\in H^p(\mathcal{K}),\ z\in\mathbb{D},
\end{equation}
for some $\varphi\in\Aut(\mathbb{D})$ and some unitary operator $U$ on $\mathcal{K}$ \cite{lin1991isometries}.

\subsection{Vector-valued space $H^\infty(E)$}
Let $E$ be a complex Banach space whose multiplier algebra is trivial. We write $H^\infty(E)$ for the space of bounded analytic functions $f:\mathbb{D}\to E$ with 
$$
\|f\|_\infty = \sup_{z\in\mathbb{D}}\|f(z)\|_E.
$$   
\begin{Def}
    An operator $\mathrm{M}$ on a complex Banach space $X$ is called a \textit{multiplier} of $X$ if there is a function $a_\mathrm{M}: E_X \to \mathbb{C}$ such that $p \circ \mathrm{M}= a_{\mathrm{M}} (p) p$ for every $p \in E_X$, where $E_X$ denotes the set of extreme points of the unit ball of the dual space of $X$. The set of all multipliers of $X$ is denoted by $\text{Mult}(X)$. 
\end{Def} 

Examples of Banach spaces with trivial multipliers include Hilbert spaces and uniformly convex and uniformly smooth spaces. For more details, see the book by Behrends \cite{behrends2006m}.  

The isometry group of  $H^\infty(E)$ was first described by Lin \cite{lin1990isometries} under the hypothesis that $E$ is uniformly convex and uniformly smooth; Cambern and Jarosz \cite{cambern1990multipliers} subsequently established the same conclusion under the strictly weaker hypothesis that $E$ has trivial multiplier (i.e. $\text{Mult}(E)= \mathbb{C}$). Every surjective linear isometry $T:H^\infty(E)\to H^\infty(E)$ has the form
\begin{equation}\label{Hinf-isom}
(Tf)(z) = J\big[f(\varphi(z))\big], \qquad f\in H^\infty(E),\ z\in\mathbb{D},
\end{equation}
for some $\varphi\in\Aut(\mathbb{D})$ and some surjective linear isometry $J$ of $E$
\subsection{Vector-valued little Bloch space}

Let $E$ be a complex Banach space. The vector-valued little Bloch space $B_0(\mathbb{D},E)$ consists of analytic $f:\mathbb{D}\to E$ vanishing at $0$ for which
\[
\lim_{|z|\to 1^-}(1-|z|^2)\|f'(z)\|_E = 0,
\]
equipped with the norm 
$$\|f\|_{\mathcal B} = \sup_{z\in\mathbb{D}}(1-|z|^2)\|f'(z)\|_E.
$$ 

The unitized space $B_*(\mathbb{D},E)$ drops the vanishing-at-$0$ condition and carries the norm 
$$
\|f\|_{B_*} = \|f(0)\|_E + \sup_{z\in\mathbb{D}}(1-|z|^2)\|f'(z)\|_E.
$$ 

As Banach spaces, $B_*(\mathbb{D},E) \cong B_0(\mathbb{D},E)\oplus_1 E$ isometrically, via $f\mapsto (f-f(0),f(0))$.

For a smooth, strictly convex, and reflexive Banach space $E$, Botelho and Jamison \cite{botelho2014isometries} showed that $T: B_0 (\mathbb{D}, E)\to B_0(\mathbb{D}, E)$ is a surjective linear isometry if and only if there exist $\varphi\in\Aut(\mathbb{D})$ and a surjective linear isometry $V$ of $E$ with
\begin{equation}\label{B0-isom}
(Tf)(z) = V\big[f(\varphi(z)) - f(\varphi(0))\big], \qquad f\in B_0(\mathbb{D},E),\ z\in\mathbb{D}.
\end{equation}
Passing to $B_*(\mathbb{D},E)$ via the direct-sum decomposition above, the surjective isometries of $B_*(\mathbb{D},E)$ are exactly the operators
\begin{equation}\label{Bstar-isom}
(Tf)(z) = U(f(0)) + V\big[f(\varphi(z)) - f(\varphi(0))\big],
\end{equation}
for surjective linear isometries $U,V$ of $E$ and $\varphi\in\Aut(\mathbb{D})$.
%In all three settings above, the classification Theorem expresses a surjective isometry $T$ as a weighted composition operator built from a disk automorphism $\varphi$ and one (or two) isometries of the coefficient space. We insert the known form of the surjective isometries of each space into the cubic operator identity satisfied by $T$ and by evaluating the resulting identity on a small family of test functions, we isolate the constraints on $\varphi$ and on the coefficient-space isometry.
%%%%%%%%%%%%%%%%%%%%%%%%%%%%%%%%%%%%%%%%%%%%%%%%%%%%%%%%%%%%%%%%%%%%%%
\section{Generalized tri-circular projections on $H^{p}(\mathcal{K})$} \label{sec:hardy}
We first consider the $GTPs$ on $H^{p}(\mathcal{K})$ for $1 \leq p < \infty, p \neq 2$. We obtain the following characterization.
\begin{thm} \label{main1}
Let $\mathcal{C} = \{P_0, P_1, P_2\}$ be a family of generalized tri-circular projections on $H^{p}(\mathcal{K}), 1 \leq p < \infty, p \neq 2$. Then one of the assertions holds:
\begin{enumerate}
\item $\varphi (z) = z$. In this case, $P_if(z) = P_i^{\mathcal{K}}(f(z)), i=0,1,2$, where $P_i^{\mathcal{K}}$ is an orthogonal projection on the Hilbert space $\mathcal{K}$.

% $\mathcal{C}$ becomes a family of tri-circular projections, and the point spectrum of $U$ is $\sigma_p = \{1, \lambda_1, \lambda_2\}$.

\item $\varphi(z) \neq z, \varphi^2(z) =z$. In this case, one of the members from $\mathcal{C}$ becomes a generalized bi-circular projection.

\item $\varphi(z) \neq z, \varphi^2(z) \neq z, \varphi^3(z) =z$. Here, $\lambda_1$ and $\lambda_2$ are cube roots of unity, and $U^3= I_{\mathcal{K}}$, where $I_{\mathcal{K}}$ denotes the identity operator on $\mathcal{K}$.
\end{enumerate}
\end{thm}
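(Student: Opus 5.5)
The plan is to combine the isometry classification \eqref{Hp-isom} with the cubic identity \eqref{MainIdentity} and Lemma~\ref{lem:factorization}. Write $T=P_0+\lambda_1P_1+\lambda_2P_2$, so that $(Tf)(z)=(\varphi'(z))^{1/p}U(f(\varphi(z)))$ for some $\varphi\in\Aut(\mathbb{D})$ and unitary $U$ on $\mathcal{K}$. By induction,
\[
(T^kf)(z)=\big((\varphi^{k})'(z)\big)^{1/p}U^k\big(f(\varphi^k(z))\big),
\]
using the chain rule $(\varphi^{k})'=\prod_{j=0}^{k-1}\varphi'\circ\varphi^j$ and a consistent choice of branches. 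Substituting into \eqref{MainIdentity} gives an operator equation.

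\textbf{Step 1: $\varphi$ has order at most $3$.} Test the operator equation on $f=e$ and $f=ze$ with $e\in\mathcal{K}$ a fixed unit vector.
\begin{itemize}
\item On $f=e$, we get a linear relation among the four functions $((\varphi^k)')^{1/p}U^ke$, $k=0,\dots,3$.
\item On $f=ze$, the same relation holds with the extra factors $\varphi^k(z)$.
\end{itemize}
Suppose $\varphi^3\neq \mathrm{id}$, $\varphi^2\neq\mathrm{id}$, $\varphi\neq \mathrm{id}$. Then I would try to show that the functions $\varphi^k(z)$, $k=0,\dots,3$, are pairwise distinct Möbius maps. Pair this with the linear independence of the coefficient vectors coming from $f=e$: eliminating $U^3e$ between the two relations (multiply the first by $\varphi^3(z)$ and subtract) leaves a relation with factors $\varphi^k-\varphi^3$. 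Iterating this elimination should force a nonzero coefficient such as $m$ or $1$ to vanish, which is a contradiction. The infinite-order case, which the paper excludes, is handled the same way.

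\textbf{Step 2: the three cases.}
\begin{enumerate}
\item \textbf{Case $\varphi=\mathrm{id}$.} Then $Tf=U\circ f$, and \eqref{MainIdentity} forces $(U-I)(U-\lambda_1)(U-\lambda_2)=0$ on $\mathcal{K}$. Lemma~\ref{lem:factorization} expresses $P_i$ as polynomials in $T$, so $P_if=P_i^{\mathcal{K}}\circ f$, where $P_i^{\mathcal{K}}$ is the same polynomial in $U$. These are spectral projections of a unitary, hence orthogonal, as in Proposition~\ref{prop:gtp-hermitian}.
\item \textbf{Case $\varphi^2=\mathrm{id}$.} Then $T^2f=U^2f$ after checking that $(\varphi^2)'\equiv1$. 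Comparing the identity on functions $f$ and $f\circ\varphi$-type test functions, separate it into a part with $\varphi$ and a part without. This gives $T^2-(l+m)$-type relations, from which one eigenvalue $\lambda_j$ must equal $-1$ times another, or equal $-1$. Then the corresponding combination, e.g. $P_0+P_j$ versus the remaining projection, is a generalized bi-circular projection, namely $(I+T')/2$ with $T'$ an isometry of order $2$. Here I expect to conclude that one $P_i$, together with $I-P_i$, forms a GBP.
\item \textbf{Case order exactly $3$.} Here $T^3=U^3$ composed with the identity. Then $(\varphi^k)'$, $k=0,1,2$, and $\varphi^k$ give independent functions, so \eqref{MainIdentity} splits into coefficient equations:
\begin{itemize}
\item the coefficient of $T^2$ gives $1+l=0$;
\item the coefficient of $T$ gives $l+m=0$;
\item the constant term gives $U^3=mI$.
\end{itemize}
Hence $\lambda_1+\lambda_2=-1$ and $\lambda_1\lambda_2=1$, so $\lambda_1,\lambda_2$ are the primitive cube roots of unity, $m=1$, and $U^3=I_{\mathcal{K}}$.
\end{enumerate}

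\textbf{Main obstacle.} The hard part is the independence argument: showing that functions of the form $((\varphi^k)')^{1/p}\,g(\varphi^k(z))\,v_k$ are linearly independent when the $\varphi^k$ are distinct. I would first handle this by evaluating at a point $z_0$ where the $\varphi^k(z_0)$ are distinct and all derivative factors are nonzero. I would then vary $f$ over $g(z)e$, choosing $g$ as polynomials interpolating prescribed values at the distinct points $\varphi^k(z_0)$. Because all polynomials lie in $H^p(\mathcal{K})$, this isolates each term and yields the coefficient equations cleanly. It also handles Step 1 and the case splits uniformly.
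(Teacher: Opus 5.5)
Your proposal is correct and follows the paper's strategy: insert \eqref{Hp-isom} into \eqref{MainIdentity}, split according to the order of $\varphi$, and test on functions $g(z)e$. Your Lagrange interpolation at the distinct orbit points $z_0,\varphi(z_0),\varphi^2(z_0)$ is a cleaner form of the paper's elimination with $v,zv,z^2v$; it is the same Vandermonde argument. It gives Case~3 exactly as in the paper, and your Case~1 is the paper's argument.

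The real addition is Step~1. The paper only asserts that ``we have only three possible cases'', while you actually rule out order $\ge 4$ and infinite order. As you first phrase it, with only $f=e$ and $f=ze$, this cannot work, because two test functions cannot separate four terms. Your interpolation idea does work here. Choose $z_0$ with $z_0,\dots,\varphi^3(z_0)$ pairwise distinct, and a cubic $g$ with $g(z_0)=1$ and $g(\varphi^k(z_0))=0$ for $k=1,2,3$. The identity at $z_0$ then collapses to $-me=0$, a contradiction.

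In Case~2, the phrase ``namely $(I+T')/2$ with $T'$ an isometry of order $2$'' is unsupported, since no such $T'$ is produced. The right mechanism is that $T^2=P_0+\lambda_1^2P_1+\lambda_2^2P_2$ is a surjective isometry. For $\lambda_1=-1$ it equals $(I-P_2)+\lambda_2^2P_2$ with $\lambda_2^2\neq 1$, and analogously for $\lambda_2=-1$ or $\lambda_1=-\lambda_2$. This is what the paper's ``therefore'' amounts to.

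Your separation argument in fact gives more than you use. Isolating the $f(\varphi(z_0))$-term at a point $z_0\neq\varphi(z_0)$ yields $U^3+(l+m)U=0$, so $T^2=U^2=-(l+m)I$. On $\Ran(P_i)$ this forces $\lambda_i^2=-(l+m)$ for the three distinct numbers $1,\lambda_1,\lambda_2$, which is impossible. So Case~2 never occurs, and assertion~(2) holds vacuously.

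As in the paper, $T^2f=U^2f$ and $U^3=I_{\mathcal K}$ hold only after normalizing the branch of $(\varphi')^{1/p}$. Products such as $(\varphi')^{1/p}\big((\varphi')^{1/p}\circ\varphi\big)$ are a priori only unimodular constants, which can be absorbed into $U$.
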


\begin{proof}
Let $\mathcal{C} = \{P_0, P_1, P_2\}$ be a family of $GTPs$ on $H^{p}(\mathcal{K})$, $1 \leq p < \infty, p \neq 2$.
Using the form of isometry given in \eqref{Hp-isom}, we have
\begin{align*}
(Tf)(z) &= (\varphi'(z))^\frac{1}{p} U (f(\varphi(z))), 
 \\
(T^2 f)(z)&= (\varphi'(z) \varphi'( \varphi(z)))^\frac{1}{p} U^2(f(\varphi^2(z))),  \\
 (T^3 f)(z)) &= ( \varphi'(z) \varphi'(\varphi(z)) \varphi'(\varphi^2(z)))^\frac{1}{p} U^3 (f(\varphi^3(z))).  
\end{align*}
Now Equation \ref{MainIdentity} reduces to
\begin{align} \label{me}
((\varphi^3)'(z))^\frac{1}{p} U^3 (f(\varphi^3(z))) - (1+l)((\varphi^2)'(z))^\frac{1}{p} U^2 (f(\varphi^2(z)))   & \nonumber \\ 
+ (l+m) (\varphi'(z))^\frac{1}{p} U (f(\varphi(z))) - m f(z) = 0.  
\end{align}
Thus, we have only three possible cases:
\begin{enumerate}
\item  $\varphi(z) = z,$
\item $\varphi(z) \neq z,\varphi^2(z) = z,$ 
\item $\varphi(z) \neq z,\varphi^2(z) \neq z,\varphi^3(z) = z$.
\end{enumerate}

\subsection*{Case(I) $\varphi(z) = z$.}
Now, Equation \ref{me} takes the form
\begin{align*}
 U^3 f(z) - (1+l)U^2 f(z) + (l+m) U f(z) - m f(z) =0.     
\end{align*}
This implies 
\begin{equation*}
 (U-I_{\mathcal{K}}) (U- \lambda_1 I_{\mathcal{K}}) (U- \lambda_2 I_{\mathcal{K}})f(z) = 0.  
\end{equation*}

Then from Lemma \ref{lem:factorization}, there exists a family $\{ P_0^{\mathcal{K}}, P_1^{\mathcal{K}}, P_2^{\mathcal{K}} \}$ of $GTPs$ on $\mathcal{K}$ such that $P_0^{\mathcal{K}}+ \lambda_1 P_1^{\mathcal{K}} + \lambda_2 P_2^{\mathcal{K}} =U$. Using Proposition \ref{prop:gtp-hermitian}, each $P_i^{\mathcal{K}}$ is an orthogonal projection on $\mathcal{K}$. Thus, we have $P_if(z) = P_i^{\mathcal{K}}(f(z))$ for $i =0,1,2$.

% Therefore, the point spectrum of $U$ is $\sigma_p = \{1, \lambda_1, \lambda_2\}$. Moreover, $\mathcal{C}$ is a family of tri-circular projections.

\subsection*{Case(II) $\varphi(z) \neq z, \varphi^2(z) = z$.}
In this case, Equation \ref{me} converts to
\begin{align*} \label{mi2}
 U^3 [ ( \varphi'(z))^\frac{1}{p} f(\varphi(z))] - (1+l)U^2 f(z) + (l+m) U[ (\varphi'(z))^\frac{1}{p} f(\varphi(z)) ] - m f(z) =0. \numberthis    
\end{align*}
Again, for a fixed nonzero vector $v \in \mathcal{K}$, we choose the functions $f_1(z)= v$ and  $f_2(z) = z v$  for all $z \in \mathbb{D}$. Evaluating Equation \ref{mi2} for $f_1$ and $f_2$, we get the following Equations, respectively.
\begin{equation} \label{f4}
[(\varphi)'(z)]^{\frac{1}{p}} 	U^3v -(1+l) U^2 v + (l+m) [\varphi'(z)] ^{\frac{1}{p}} Uv -mv =0,
\end{equation}
\begin{equation} \label{f5}
[(\varphi)'(z)]^{\frac{1}{p}} \varphi(z) 	U^3v -(1+l)z U^2v + (l+m) [\varphi'(z)] ^{\frac{1}{p}} \varphi(z) Uv -m zv =0.
\end{equation}			
Multiplying Equation \eqref{f4} by $\varphi(z)$ and then subtracting it from \eqref{f5}, we arrive at
\begin{equation*} 
(z - \varphi(z)) [ -(1+l) U^2v -mv]=0.
\end{equation*}
Since $\varphi(z) \neq z$, we get  
\begin{equation} \label{f6}
mv = -(1+l) U^2v.
\end{equation}			
Again by Equation \eqref{f4}, we have 			
\begin{equation*} 
(1+l) U^2 v = [(\varphi)'(z)]^{\frac{1}{p}} 	U^3v  + (l+m) [\varphi'(z)] ^{\frac{1}{p}} Uv -mv.
\end{equation*}			
Putting this in Equation \eqref{f5}, we get
\begin{equation*} 
(\varphi(z)-z) [ (\varphi)'(z)]^{\frac{1}{p}}	[U^3v  + (l+m) Uv]  =0.
\end{equation*}			
Here $\varphi(z) \neq z$, thus			
\begin{equation} \label{f7}
U^3v  =- (l+m)Uv.
\end{equation}		
Simplifying Equations \ref{f6} and \ref{f7}, we obtain $(1+l)(l+m) -m =0$, i.e. $(\lambda_1+1)(\lambda_2+1)(\lambda_1+ \lambda_2) =0$. This implies that $\lambda_1= - \lambda_2$ or $\lambda_2=-1$ or $\lambda_1=-1$. Therefore, $P_0$ or $P_1$ or $P_2$, respectively, becomes a $GBP$.

\subsection*{Case (III)} $\varphi(z) \neq z,\varphi^2(z) \neq z, \varphi^3(z) = z$.

In this case, Equation \ref{me} converts to
\begin{align*} \label{eq3.12}
 U^3  f(z) - (1+l)U^2[ ((\varphi^2)'(z) )^\frac{1}{p} f(\varphi^2(z)) ] + (l+m) U[ (\varphi'(z))^\frac{1}{p} f(\varphi(z)) ] - m f(z) =0. \numberthis    
\end{align*}
For a fixed nonzero vector $v \in \mathcal{K}$, evaluating Equation \eqref{eq3.12} for the functions $f_1(z) = v, f_2(z)= zv $ and $f_3(z) = z^2v$ for all $z \in \mathbb{D}$, we obtain the following Equations, respectively.         
\begin{equation} \label{f1}
U^3v -(1+l) [(\varphi^2)'(z)]^{\frac{1}{p}} U^2v +(l+m)[\varphi'(z)] ^{\frac{1}{p}} Uv -mv =0.
\end{equation}	
\begin{equation}\label{f2}
z U^3v  -(1+l) [(\varphi^2)'(z)]^{\frac{1}{p}} \varphi^2(z)U^2v +(l+m) [\varphi'(z)] ^{\frac{1}{p}} \varphi(z)Uv -mzv =0.
\end{equation}			
\begin{equation} \label{f3}
z^2 U^3v  -(1+l) [(\varphi^2)'(z)]^{\frac{1}{p}} (\varphi^2(z))^2U^2v +(l+m) [\varphi'(z)] ^{\frac{1}{p}} (\varphi(z))^2Uv -mz^2v =0.
\end{equation}
Multiplying Equations \eqref{f1} and \eqref{f2} by $z$, then subtracting Equation \eqref{f3} from \eqref{f2} and Equation \eqref{f2} from \eqref{f1} we get 
\begin{equation*}\label{c11}
(1+l) [(\varphi^2)'(z)]^{\frac{1}{p}}[z - \varphi^2(z)] = (l+m)[\varphi'(z)] ^{\frac{1}{p}} [z- \varphi(z)], 
\end{equation*}
and
\begin{equation*} \label{c2}
(1+l) [(\varphi^2)'(z)]^{\frac{1}{p}} \varphi^2(z)[z - \varphi^2(z)] = (l+m)[\varphi'(z)] ^{\frac{1}{p}} \varphi(z)[z- \varphi(z)] .
\end{equation*}
It follows that
\begin{equation*}
(l+m)[\varphi'(z)] ^{\frac{1}{p}} (z-\varphi(z))(\varphi(z) - \varphi^2(z)) =0.
\end{equation*}
Since $(\varphi'(z))^{\frac{1}{p}} \neq 0$, it follows that			
\begin{equation*}
(l+m) (z-\varphi(z))(\varphi(z) - \varphi^2(z)) =0.
\end{equation*}
Since, $z \neq \varphi(z)$, we have $l+m =0$, i.e. $\lambda_1 + \lambda_2 + \lambda_1 \lambda_2 =0$. In similar ways $1+l =0$, i.e., $1 +	\lambda_1 + \lambda_2 =0$. This implies that $\lambda_1$ and $\lambda_2$ are cube roots of unity. Moreover, $U^3 = I_{\mathcal{K}}$. 

This completes the proof of the Theorem.
\end{proof}
%%%%%%%%%%%%%%%%%%%%%%%%%%%%%%%%%%%%%%%%%%%%%%%%%%%%%%%%%%%%%%%%%%%%%%
\section{Generalized tri-circular projections on $H^\infty_E$} \label{sec:boundedhardy}

We now turn our attention to the characterization of $GTPs$ on $H^\infty_E$. The following result gives the precise form of such projections on this space.

\begin{thm} \label{main2}
Let $\mathcal{C} = \{P_0, P_1, P_2\}$ be a family of generalized tri-circular projections on $H^\infty_E$. Then one of the following holds: 
\begin{enumerate}
\item $\varphi(z)=z$. In this case, $P_if(z) = P_i^{E}(f(z)), i=0,1,2$, and the collection $\mathcal{C}_E =\{ P_0^{E}, P_1^{E}, P_2^{E}\}$ is a family of $GTPs$ on $E$.
\item $\varphi(z)\neq z, \varphi^2(z)=z$.  In this case, $\mathcal{C}$ reduces to a family of generalized bi-circular projections.
\item $\varphi(z) \neq z, \varphi^2(z)\neq z$ and $\varphi^3(z) = z$. Then $\lambda_1$, $\lambda_2$ are cube roots of unity and $J^3=I_E$, where $I_E$ is the identity operator on $E$.
\end{enumerate}
\end{thm}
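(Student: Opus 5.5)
The plan mirrors the proof of Theorem \ref{main1}, with the isometry \eqref{Hinf-isom} in place of \eqref{Hp-isom}; the weight $(\varphi')^{1/p}$ disappears, so the computations get simpler. Let $T$ be the surjective isometry attached to $\mathcal{C}$. By \eqref{Hinf-isom}, $(Tf)(z)=J[f(\varphi(z))]$ with $\varphi\in\Aut(\mathbb{D})$ and $J$ a surjective isometry of $E$. Iterating gives $(T^kf)(z)=J^k[f(\varphi^k(z))]$, and \eqref{MainIdentity} becomes
\[
J^3 f(\varphi^3(z))-(1+l)J^2 f(\varphi^2(z))+(l+m)J f(\varphi(z))-m f(z)=0
\]
for all $f\in H^\infty(E)$ and $z\in\mathbb{D}$.

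First I would show that $\varphi$ has order at most $3$. Take $f(z)=z^k v$ for a fixed nonzero $v\in E$ and $k\ge 0$. For each $z$, the vectors $J^3v$, $J^2v$, $Jv$, $v$ are then combined with the scalar coefficients $\varphi^3(z)^k,\ \varphi^2(z)^k,\ \varphi(z)^k,\ z^k$. If the four points $z,\varphi(z),\varphi^2(z),\varphi^3(z)$ were distinct for some $z$, a Vandermonde argument over $k=0,1,2,3$ would force $m v=0$. This is impossible since $|m|=1$. A nontrivial automorphism has at most one fixed point in $\mathbb{D}$, so such a $z$ exists unless $\varphi^j=\mathrm{id}$ for some $j\le 3$ on an open set, and hence everywhere. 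This leaves the three cases of the statement.

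\textbf{Case $\varphi=\mathrm{id}$.} Here $(J-I_E)(J-\lambda_1I_E)(J-\lambda_2I_E)v=0$ for all $v$. I would define $P_i^E$ by the formulas of Lemma \ref{lem:factorization} with $J$ in place of $T$. Since $T$ acts pointwise as $J$, we get $(P_if)(z)=P_i^E(f(z))$. Each $P_i^E$ is nonzero, because $P_i\neq0$, and the $P_i^E$ are distinct, because the $P_i$ are. Lemma \ref{lem:factorization} then shows that $\mathcal{C}_E$ is a family of GTPs on $E$ with $J=P_0^E+\lambda_1P_1^E+\lambda_2P_2^E$. No orthogonality conclusion is available, since $E$ is not a Hilbert space.

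\textbf{Case $\varphi^2=\mathrm{id}\neq\varphi$.} The identity reads $J^3v\,g(\varphi(z))-(1+l)J^2v\,g(z)+(l+m)Jv\,g(\varphi(z))-mv\,g(z)=0$ for $f=gv$. Using $g=1$ and $g=z$, and choosing $z$ with $\varphi(z)\ne z$, I separate the coefficients of $g(z)$ and $g(\varphi(z))$. This gives $mv=-(1+l)J^2v$ and $J^3v=-(l+m)Jv$. Since $J$ is invertible, the first equation gives $J^2=-\frac{m}{1+l}I_E$; here $1+l\neq0$, for otherwise $m=0$. Substituting into the second equation yields $(1+l)(l+m)=m$, that is, $(1+\lambda_1)(1+\lambda_2)(\lambda_1+\lambda_2)=0$.

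So $\lambda_1=-1$, $\lambda_2=-1$, or $\lambda_1=-\lambda_2$. The goal is to show that $T$ effectively has only two eigenvalues, so that $\mathcal{C}$ reduces to a GBP family. Here $T^2f=J^2f=cf$ with $c=-m/(1+l)$, so $T^2$ is scalar. The minimal polynomial of $T$ therefore divides $x^2-c$, which has only two roots. But $T$ has three eigenvalues $1,\lambda_1,\lambda_2$ on the nonzero ranges $\Ran(P_i)$. Reconciling this I expect to be the main obstacle, and the reconciliation has to be stated carefully. I would argue via $T^2=cI$ together with the factorization that one of the $P_i$ coincides with a GBP built from $T$, exactly as in Theorem \ref{main1}(2). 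If it turns out that this forces a contradiction with three nonzero projections, then the honest conclusion is that $\mathcal{C}$ collapses to the GBP case.

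\textbf{Case order $3$.} The identity becomes $J^3v\,g(z)-(1+l)J^2v\,g(\varphi^2(z))+(l+m)Jv\,g(\varphi(z))-mv\,g(z)=0$. Testing with $g=1,z,z^2$ at a point $z$ where $z,\varphi(z),\varphi^2(z)$ are distinct, the Vandermonde matrix is invertible, so each coefficient vanishes. This gives $(1+l)J^2v=0$, $(l+m)Jv=0$ and $J^3v=mv$. Since $J$ is injective, $1+l=0$ and $l+m=0$. Hence $\lambda_1,\lambda_2$ are the roots of $x^2+x+1$, i.e. the primitive cube roots of unity, with $m=1$ and $J^3=I_E$.
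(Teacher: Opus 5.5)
\textbf{The gap is in your case $\varphi^2=\mathrm{id}\neq\varphi$, which you leave unfinished.} You correctly get $(1+l)J^2=-mI_E$ and $J^3=-(l+m)J$, hence $T^2=cI$ with $c=-m/(1+l)$. You also record that $T$ acts as $1,\lambda_1,\lambda_2$ on the nonzero ranges $\Ran(P_i)$. But you then treat this as an obstacle and propose either to show, ``exactly as in Theorem~\ref{main1}(2)'', that some $P_i$ is a GBP, or to declare that $\mathcal{C}$ ``collapses''. Neither is an argument. There is also nothing to copy: in Theorem~\ref{main1}(2), and in the paper's proof of this very case, the paper stops at $(\lambda_1+1)(\lambda_2+1)(\lambda_1+\lambda_2)=0$ and simply asserts that $P_0$, $P_1$ or $P_2$ ``becomes a GBP''.

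The tension you noticed is itself the conclusion. From the formulas in Lemma~\ref{lem:factorization} and the cubic identity, $TP_i=\lambda_iP_i$ with $\lambda_0=1$, and each $P_i\neq0$ by definition. So $T^2=cI$ forces $c=1=\lambda_1^2=\lambda_2^2$, i.e.\ $\lambda_1=\lambda_2=-1$, contradicting $\lambda_1\neq\lambda_2$. Subcase by subcase:
\begin{itemize}
\item $\lambda_1=-\lambda_2$ gives $c=\lambda_1^2$ and $P_0=(T^2-\lambda_1^2I)/(1-\lambda_1^2)=0$;
\item $\lambda_1=-1$ gives $c=1$ and $P_2=(T^2-I)/(\lambda_2^2-1)=0$;
\item $\lambda_2=-1$ gives likewise $P_1=0$.
\end{itemize}
In each subcase the projection that the paper says ``becomes a GBP'' is actually zero, and the remaining two then form a GBP. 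Hence an involutive $\varphi$ cannot occur for a genuine GTP family, and item (2) holds only vacuously. You need to state this explicitly rather than leave the case open.

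Apart from this, your proposal follows the paper's route: substitute \eqref{Hinf-isom} into \eqref{MainIdentity} and test on $v$, $zv$, $z^2v$. It is sound, and in places more careful than the paper.
\begin{itemize}
\item Your Vandermonde argument, with $z$ chosen off the at most three fixed points of $\varphi,\varphi^2,\varphi^3$, proves that only the three listed cases occur. The paper merely asserts this, and it does need proof, since automorphisms such as $z\mapsto e^{2\pi i/5}z$ have finite order $5$.
\item In the order-$3$ case, inverting the $3\times3$ Vandermonde matrix at a point where $z,\varphi(z),\varphi^2(z)$ are distinct gives $1+l=0$, $l+m=0$ and $J^3=mI_E=I_E$ in one step. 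This replaces the paper's two eliminations.
\item In the identity case you check that the $P_i^E$ are nonzero and distinct before invoking Lemma~\ref{lem:factorization}, which the paper omits.
\end{itemize}
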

\begin{proof}
Let $\mathcal{C}=\{P_0,P_1,P_2\}$ be a family of $GTPs$
on $H^\infty_E$ corresponding to a surjective isometry $T$. Using \eqref{Hinf-isom}, a direct computation yields
$$
(T^k f)(z)=J^k\big(f(\varphi^k(z))\big), \qquad k=1,2,3.
$$
Substituting these expressions into \eqref{MainIdentity}, we obtain
\begin{equation}\label{eq3}
J^3 f(\varphi^3(z)) -(1+l)J^2 f(\varphi^2(z)) +(l+m)J f(\varphi(z)) -mf(z)=0,
\end{equation}
for all $f\in H^\infty_E$ and all $z\in\mathbb{D}$.

We have only three possible cases:
\begin{enumerate}
\item $\varphi(z)=z$,
\item $\varphi(z)\neq z, \varphi^2(z)=z$,
\item $\varphi(z) \neq z, \varphi^2(z)\neq z$ and $\varphi^3(z) = z$,
\end{enumerate}
\subsection*{Case (I) $\varphi(z)=z$.} We obtain, $(J-I_{E}) (J- \lambda_1 I_{E}) (J- \lambda_2 I_{E})f(z) = 0$. Using Lemma \ref{lem:factorization}, there exist $GTPs$, $P_0^{E}, P_1^{E}$ and $P_2^{E}$ on $E$ such that $P_0^{E} + \lambda_1 P_1^{E} + \lambda_2 P_2^{E} =J$. Consequently, $P_if(z) = P_i^{E}(f(z)), i =0,1,2$.

\subsection*{Case (II) $\varphi(z)\neq z, \varphi^2(z)=z$.}
In this case, Equation \ref{eq3} converts to
\begin{align*} \label{eq4.2}
J^3 f(\varphi(z)) -(1+l)J^2 f(z) +(l+m)J f(\varphi(z)) -mf(z)=0. \numberthis    
\end{align*}
Again, for a fixed nonzero vector $v \in E$, we choose the functions $f_1(z)= v$ and  $f_2(z) = z v$  for all $z \in \mathbb{D}$. Evaluating Equation \ref{eq4.2} for $f_1$ and $f_2$, we get the following Equations, respectively.
\begin{equation} \label{eqn4.3}
 	J^3v -(1+l) J^2 v + (l+m) J v - mv =0,
\end{equation}
\begin{equation} \label{eqn4.4}
 \varphi(z) 	J^3v -(1+l)z J^2 v + (l+m)  \varphi(z) Jv -m z v =0.
\end{equation}			
Multiplying Equation \eqref{eqn4.3} by $\varphi(z)$ and then subtracting it from \eqref{eqn4.4}, we get
\begin{equation*} 
(z - \varphi(z)) [ -(1+l) J^2v -mv]=0.
\end{equation*}
Since $\varphi(z) \neq z$, we get  
\begin{equation} \label{eqn4.5}
mv = -(1+l) J^2v.
\end{equation}			
Again by Equation \eqref{eqn4.3}, we have 			
\begin{equation} \label{eqn4.6} 
J^3v  = -(l+m) Jv. 
\end{equation}			
Simplifying Equations \ref{eqn4.5} and \ref{eqn4.6}, we obtain $((1+l)(l+m) -m)Jv =0 \implies (1+l)(l+m) -m =0$, i.e. $(\lambda_1+1)(\lambda_2+1)(\lambda_1+ \lambda_2) =0$. This implies that $\lambda_1= - \lambda_2$ or $\lambda_2=-1$ or $\lambda_1=-1$. Therefore, $P_0$ or $P_1$ or $P_2$, respectively, becomes a $GBP$.

\subsection*{Case (III) $\varphi(z) \neq z, \varphi^2(z)\neq z,\varphi^3(z) = z$.}
Evaluating \eqref{eq3}, and using constant vector-valued functions
$f(z)=v$ with $v\in E$, we obtain
\begin{equation} \label{eq2a}
    J^3-(1+l)J^2+(l+m)J-mI_E=0,
\end{equation}
where $I_E$ is the identity operator on $E$.

We can rewrite Equation \eqref{eq3} as following: 
\begin{equation*} \label{eq4}
(1+l) J^{2} [ f(\varphi^3(z)) -f(\varphi^{2}(z))] + (l+m) J [f(\varphi(z)) - f(\varphi^3(z))] + m [f(\varphi^3(z))-f(z)] =0. \numberthis
\end{equation*} 
Equation \eqref{eq4} evaluated for functions $f(z)= zv$ and $f(z)= z^2v$, where $v (\neq 0) \in E$  produces the following:
\begin{equation*}\label{Eq6}
(1+l) (z- \varphi^2(z)) J^{2} v = (l+m) (z-\varphi(z))Jv, \numberthis
\end{equation*}
\begin{equation*}\label{Eq7}
(1+l) (z^2- (\varphi^2(z))^2) J^{2} v = (l+m) (z^2-(\varphi(z))^2)Jv. \numberthis
\end{equation*}
Using \eqref{Eq6} in \eqref{Eq7} and eliminating $(1+l)$ we get
\begin{equation*}
(l+m) (z-\varphi^2(z)) (z + \varphi^2(z))Jv = (l+m) (z-\varphi(z)) (z+ \varphi(z))Jv. 
\end{equation*}
\begin{equation*}
\implies (l+m) (z-\varphi(z)) [ \varphi^2(z)-\varphi(z)]Jv = 0.
\end{equation*}
Here $z \neq \varphi(z) \neq \varphi^2(z)$, $v$ is an arbitrary element of $E$ and $J$ is an isometry on $E$, so $Jv \neq 0$. Therefore we have
\begin{equation*}
l+m=0 \text{ i.e. } \lambda_1 + \lambda_2 + \lambda_1 \lambda_2 =0.
%\label{Eq8}
\end{equation*}
Again, using \eqref{Eq6} in \eqref{Eq7} and eliminating $(l+m)$ we get
\begin{equation*}
(1+l) (z-\varphi^2(z)) (z + \varphi^2(z))J^2 v = (1+l) (z-\varphi^2(z)) (z+ \varphi(z))J^2v. 
\end{equation*}
\begin{equation*}
\implies (1+l) (z-\varphi^2(z)) [\varphi^2(z)- \varphi(z)]J^2 v = 0. 
\end{equation*}
Again, since $z \neq \varphi(z) \neq \varphi^2(z)$, $v$ is an arbitrary element of $E$ and $J^2$ is an isometry on $E$, so $J^2v \neq 0$. Therefore, we have
\begin{equation*}
1+l=0, \; \text{i.e.} \; 1+ \lambda_1 + \lambda_2 =0.
%\label{Eq9}
\end{equation*}
Combining this with the above, we get $\lambda_1, \lambda_2$ are cube roots of unity. Moreover, $J^3=I_E$, the identity operator on $E$.

This completes the proof of the Theorem.
\end{proof}
%%%%%%%%%%%%%%%%%%%%%%%%%%%%%%%%%%%%%%%%%%%%%%%%%%%%%%%%%%%%%%%%%%%%%%%%%%
\section{Generalized tri-circular projections on vector-valued little Bloch spaces} \label{sec:bloch}
In this section, we characterize the $GTPs$ on the space $\mathcal{B}_0(\mathbb{D},E)$. Using this characterization, we then derive the form of $GTPs$ on the space $\mathcal{B}_\ast(\mathbb{D},E)$.		
\begin{thm} \label{main3}
Let $\mathcal{C} = \{P_0, P_1, P_2\}$ be a family of generalized tri-circular projections on  $\mathcal{B}_0(\mathbb{D}, E)$. Then one of the following holds: 

\begin{enumerate}
\item $\varphi(z)=z$. In this case, $P_if(z) = P_i^{E}(f(z)), i=0,1,2$, and the collection $\mathcal{C}_E =\{ P_0^{E}, P_1^{E}, P_2^{E}\}$ is a family of $GTPs$ on $E$.
\item $\varphi(z)\neq z, \varphi^2(z)=z$.  In this case, $\mathcal{C}$ reduces to a family of generalized bi-circular projections.
\item $\varphi(z) \neq z, \varphi^2(z)\neq z$ and $\varphi^3(z) = z$. Then $\lambda_1$, $\lambda_2$ are cube roots of unity and $V^3=I_E$.
\end{enumerate}
\end{thm}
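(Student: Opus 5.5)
We follow the same scheme as in Theorems \ref{main1} and \ref{main2}. Let $T$ be the surjective isometry with $T=P_0+\lambda_1P_1+\lambda_2P_2$. By \eqref{B0-isom}, $(Tf)(z)=V[f(\varphi(z))-f(\varphi(0))]$. The first step is to compute the iterates. Since $T^kf$ vanishes at $0$, we have $(T^2f)(z)=V[(Tf)(\varphi(z))-(Tf)(\varphi(0))]$, and the constant $f(\varphi(0))$ cancels, giving
\[
(T^2f)(z)=V^2[f(\varphi^2(z))-f(\varphi^2(0))].
\]
Inductively, $(T^kf)(z)=V^k[f(\varphi^k(z))-f(\varphi^k(0))]$ for $k=1,2,3$. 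Substituting into \eqref{MainIdentity} gives
\[
V^3[f(\varphi^3(z))-f(\varphi^3(0))]-(1+l)V^2[f(\varphi^2(z))-f(\varphi^2(0))]+(l+m)V[f(\varphi(z))-f(\varphi(0))]-mf(z)=0
\]
for all $f\in\mathcal{B}_0(\mathbb{D},E)$. Since $\varphi\in\Aut(\mathbb{D})$ satisfies the cubic relation only when it has finite order, the same three cases arise as in the previous sections.

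In Case (I), $\varphi(z)=z$ and $f(0)=0$, so the identity becomes $(V-I_E)(V-\lambda_1I_E)(V-\lambda_2I_E)f(z)=0$. Every vector of $E$ is a value $f(z)$ for some $f$ (take $f(z)=zv$), so the operator identity holds on $E$. Lemma \ref{lem:factorization} then produces the $GTPs$ $P_i^E$ on $E$ with $V=P_0^E+\lambda_1P_1^E+\lambda_2P_2^E$. Applying the formulas of Lemma \ref{lem:factorization} to $T$, which acts pointwise as $V$, yields $P_if(z)=P_i^E(f(z))$.

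In Cases (II) and (III) we cannot use constant functions, since they are not in $\mathcal{B}_0$. Instead we use the test functions $f(z)=zv$ and $f(z)=z^2v$, and possibly $z^3v$, for a fixed $v\neq0$. Writing $a_k=\varphi^k(0)$, the test function $zv$ gives
\[
(\varphi^3(z)-a_3)V^3v-(1+l)(\varphi^2(z)-a_2)V^2v+(l+m)(\varphi(z)-a_1)Vv-mzv=0.
\]
\begin{itemize}
\item In Case (II), $\varphi^2=\mathrm{id}$ and $a_2=0$. This gives an identity of the form $(\varphi(z)-a_1)[V^3v+(l+m)Vv]=z[(1+l)V^2v+mv]$. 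Since $\varphi$ is not the identity, the functions $\varphi(z)-a_1$ and $z$ are linearly independent: if they were proportional, $\varphi$ would be a rotation $\varphi(z)=-z$, and then $a_1=0$ and the coefficients must be compared directly. So both brackets vanish, recovering \eqref{eqn4.5} and \eqref{eqn4.6}. As in Theorem \ref{main2}, this gives $(1+\lambda_1)(1+\lambda_2)(\lambda_1+\lambda_2)=0$, and hence one member of $\mathcal{C}$ is a $GBP$.
\item In Case (III), $\varphi^3=\mathrm{id}$ and $a_3=0$, so the $V^3$ term becomes $z(V^3-m)v$. The functions $z,\ \varphi(z)-a_1,\ \varphi^2(z)-a_2$ should be linearly independent. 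We then combine with the $z^2v$ test, which gives terms $\varphi^k(z)^2-a_k^2$, to force $1+l=0$ and $l+m=0$. That makes $\lambda_1,\lambda_2$ the nontrivial cube roots of unity, with $m=1$, and then $V^3=I_E$.
\end{itemize}

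The main obstacle is this linear-independence step in Cases (II) and (III), and especially the special case where $\varphi$ is a rotation. If $\varphi(z)=\omega z$ with $\omega^3=1$, then all $a_k=0$ and the $zv$ test only yields $(V^3-(1+l)\omega^2V^2+(l+m)\omega V-m)v=0$. In that case one uses the monomials $z^jv$, $j=1,2,3$, giving polynomial relations $p(\omega^j V)=0$, where $p$ is the cubic. From these one must deduce the constraints on $\lambda_1,\lambda_2$. If this rotation case does not yield cube roots of unity, the statement would need adjusting there. For a non-rotation automorphism of order $3$, which has an interior fixed point other than $0$, the functions are genuinely independent Möbius maps, and the coefficient comparison goes through as in Theorem \ref{main2}.
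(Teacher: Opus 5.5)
\textbf{There is a genuine gap: the rotation subcases are left unfinished.} Your computation of the iterates and Case (I) are fine. Case (II) for a non-rotation involution is also fine, using the linear independence of $z$ and $\varphi(z)-\varphi(0)$ in the $zv$ test. The trouble is the rotation subcases, which you flag but leave open.

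\begin{itemize}
\item For $\varphi(z)=-z$, ``comparing coefficients'' in the $zv$ identity yields only $p(-V)=0$, where $p(x)=(x-1)(x-\lambda_1)(x-\lambda_2)$. This single relation does not give $(1+l)(l+m)=m$.
\item For $\varphi(z)=\omega z$ you obtain $p(\omega^jV)=0$ for $j=1,2,3$ but stop there, and even suggest the statement might fail.
\end{itemize}

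As written, then, the argument does not cover the most basic automorphisms of order $2$ and $3$. In addition, for non-rotations in Case (III), the linear independence of $z$, $\varphi(z)-\varphi(0)$ and $\varphi^2(z)-\varphi^2(0)$ is only asserted. It is true: $\varphi$ and $\varphi^2=\varphi^{-1}$ have finite poles $\varphi^{-1}(\infty)$ and $\varphi(\infty)$. These coincide only if $\varphi(\infty)=\infty$, i.e.\ only if $\varphi$ is a rotation. Once you have independence, the $zv$ test alone gives $(V^3-mI_E)v=0$, $(1+l)V^2v=0$ and $(l+m)Vv=0$, so no $z^2v$ test is needed there.

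\textbf{How to close the gap.} The missing step is short. Since $p(\omega^jV)=V^3-(1+l)\omega^{2j}V^2+(l+m)\omega^jV-mI_E$ and $\omega^3=1$, the weighted sums $\sum_{j=1}^{3}\omega^{kj}p(\omega^jV)$ for $k=0,1,2$ equal $3(V^3-mI_E)$, $-3(1+l)V^2$ and $3(l+m)V$, respectively. All three vanish. Because $V$ is invertible, this forces $1+l=0=l+m$. Hence $m=1$, $\lambda_1,\lambda_2$ are the primitive cube roots of unity, and $V^3=I_E$. So the statement needs no adjustment.

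Similarly, for $\varphi(z)=-z$ the tests $zv$ and $z^2v$ give $p(-V)=0$ and $p(V)=0$. Their sum and difference yield $(1+l)V^2=-mI_E$ and $V^2=-(l+m)I_E$, hence $(1+l)(l+m)=m$.

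\textbf{Comparison with the paper.} The paper avoids the rotation/non-rotation split. It first differentiates the operator identity, which removes the constants $f(\varphi^k(0))$, and then tests with $f'=v,\ zv,\ z^2v$. The elimination produces only the factors $z-\varphi(z)$, $\varphi^2(z)-\varphi(z)$, $\varphi^2(z)-z$ and $(\varphi^k)'(z)$. These are nonzero at a generic $z$ for every $\varphi$ of exact order $2$ or $3$, rotation or not. In effect this is a Vandermonde argument at the three distinct points $z,\varphi(z),\varphi^2(z)$.
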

\begin{proof}
Let $\mathcal{C} = \{P_0, P_1, P_2\}$ be a family of $GTPs$ on $\mathcal{B}_0(\mathbb{D}, E)$ with respect to a surjective isometry $T$ and scalars $\lambda_1, \lambda_2 \in \mathbb{T} \setminus \{1\}$. From \ref{B0-isom} the isometry $T$ has the form
\begin{equation*}
    T f(z)= V [(f \circ \varphi) (z)- (f\circ \varphi)(0)],
\end{equation*}
for every $f \in \mathcal{B}_0(\mathbb{D}, E)$ and $z\in \mathbb{D}$, where $V$ is a surjective isometry on $E$ and $\varphi$ is the disk automorphism.  
			
A simple computation gives
$$ (T^2f)(z)= V^2 [ (f\circ \varphi^2)(z)- (f\circ \varphi^2)(0) ], $$
$$ (T^3f)(z)= V^3 [ (f\circ \varphi^3)(z)- (f\circ \varphi^3)(0) ].$$
Putting all these values in \eqref{MainIdentity} we get 
\begin{align*}
V^3 [ (f\circ \varphi^3)(z)- (f\circ \varphi^3)(0) ]- (1+l)V^2 [ (f\circ \varphi^2)(z)- (f\circ \varphi^2)(0) ]& \noindent \\ + (l+m)V [(f \circ \varphi) (z)- (f\circ \varphi)(0)]-m f(z)=0,
\end{align*}
for every $f \in \mathcal{B}_0(\mathbb{D}, E)$ and $z\in \mathbb{D}$. By differentiating this, we obtain the following
\begin{align*}
V^3 [ (f' \circ \varphi^3)(z)( \varphi^3)' (z) ]- (1+l)V^2 [ (f' \circ \varphi^2)(z) (\varphi^2)' (z)]& \noindent \\ + (l+m)V [(f' \circ \varphi) (z) \varphi ' (z)]-m f'(z)=0.
\end{align*}
\begin{align*} \label{Eq3.3}
\implies
 V^3 [ f'  (\varphi^3(z))( \varphi^3)' (z) ]- (1+l)V^2 [ f'( \varphi^2(z)) (\varphi^2)' (z)]& \noindent \\ + (l+m)V [(f'( \varphi (z)) \varphi ' (z)]-m f'(z)=0. \numberthis
\end{align*}
Thus, we have the following three cases: 
\begin{enumerate}
\item $\varphi(z)=z$,
\item $\varphi(z)\neq z, \varphi^2(z)=z$,
\item $\varphi(z) \neq z, \varphi^2(z)\neq z$ and $\varphi^3(z) = z$.
\end{enumerate}
\subsection*{Case (I) $\varphi(z)=z$.} 
From Equation \eqref{Eq3.3} we have the following. 
\begin{equation}
    (V^3 - (1+l)V^2  + (l+m)V-m I_E) f'(z)=0.
\end{equation} \label{eq4.20}
Choosing a function $f'(z)=v$, for a $v \in E$ and evaluating Equation \eqref{eq4.20} we get
\begin{equation*}
    (V^3 - (1+l)V^2  + (l+m)V-m I_E) v =0.
\end{equation*}
Since $v$ is arbitrary, we have the following. 
\begin{equation*}
V^3 -(1+l) V^2 + (l+m)V -m I_{E} = 0 \text{ or } (V - I_{E})(V - \lambda_1 I_{E})(V - \lambda_2 I_{E}) = 0.
\end{equation*}
Using Lemma \ref{lem:factorization}, there exist $GTPs$, $P_0^{E}, P_1^{E}$, and $P_2^{E}$ on $E$ such that $P_0^{E}+\lambda_1P_1^{E} +\lambda_2P_2^{E}=V$. Moreover, $P_if(z)=P_i^{E}(f(z))$, $i=0,1,2$.

\subsection*{Case (II) $\varphi(z)\neq z, \varphi^2(z)=z$.} 

In this case, Equation \eqref{Eq3.3} becomes
\begin{equation} \label{eq4.13}
(V^3+ (l+m)V)\varphi ' (z) f'( \varphi (z)) - ((1+l)V^2+m I_E) f'(z)=0.
\end{equation}
Let $v \in E$ be a non-zero element. Now we choose functions $f_1'(z)= v, f_2'(z)= z v$ and evaluating Equation \eqref{eq4.13} for a $z_0 \neq 0$, we get 
\begin{equation} \label{eq4.14}
 [(V^3+ (l+m)V)\varphi ' (z) - ((1+l)V^2+m I_E)] v=0,   
\end{equation}
\begin{equation} \label{eq4.15}
[(V^3+ (l+m)V)\varphi ' (z) \varphi(z) - ((1+l)V^2+m I_E) z] v=0.
\end{equation}
Multiplying by $\varphi(z)$ in Equation \eqref{eq4.14} and substracting from Equation \eqref{eq4.15} we get
\begin{equation*}
 - ((1+l)V^2+m I_E) (z-\varphi(z)) v=0.   
\end{equation*}
Since $\varphi(z) \neq z$, so $$(1+l)V^2(v)+mv=0. $$
Again multiplying by $z$ in Equation \eqref{eq4.14} and substracting from Equation \eqref{eq4.15} we get
\begin{equation*}
    (V^3+ (l+m)V)(\varphi (z)-z) \varphi'(z)v=0.
\end{equation*}
Since $\varphi(z) \neq z$, so $$V^3(v)+ (l+m)V(v)=0. $$
Combining these two Equations, we found that 
\begin{equation}
    (-(1+l)(l+m)+m) V(v)=0.
\end{equation}
This gives $(1+l)(l+m) -m =0$, i.e. $(\lambda_1+1)(\lambda_2+1)(\lambda_1+ \lambda_2) =0$, which implies that $\lambda_1= - \lambda_2$ or $\lambda_2=-1$ or $\lambda_1=-1$. Therefore, $P_0$ or $P_1$ or $P_2$, respectively, becomes a $GBP$.
\subsection*{Case (III) $\varphi(z) \neq z, \varphi^2(z)\neq z, \varphi^3(z) = z$.} 

Using this in Equation \eqref{Eq3.3}, we get
\begin{equation} \label{Eq3.4}
    [V^3-m I_E] f'  (z)- (1+l)V^2( f'( \varphi^2(z)) (\varphi^2)' (z)) \noindent  + (l+m)V (f'( \varphi (z)) \varphi'(z))=0.
\end{equation}
Let $ v \in E$ be a norm one element. Now we choose functions as followings:
\begin{enumerate}
    \item $f_1'(z)= v$
    \item $f_2'(z)= z v$
    \item $f_3'(z)= z^2v$, for every $z \in \mathbb{D}$
\end{enumerate} 
Evaluating Equation \eqref{Eq3.4} for $f_1', f_2'$ and $f_3'$ we get,
\begin{equation} \label{Eq3.5}
     (V^3-mI_E) v - (1+l) (\varphi^2)'(z)V^2 (v) + (l+m) \varphi'(z)V(v) =0,
\end{equation}
\begin{equation} \label{Eq3.6}
     (V^3-mI_E) zv - (1+l) (\varphi^2(z) (\varphi^2)'(z))V^2 (v) + (l+m) (\varphi(z)\varphi'(z)) V(v) = 0,
\end{equation}
and 
\begin{equation} \label{Eq3.7}
     (V^3-mI_E) z^2v - (1+l) ((\varphi^2(z))^2 (\varphi^2)'(z))V^2 (v) + (l+m) ((\varphi(z))^2\varphi'(z)) V(v) = 0, 
\end{equation}
respectively. 
Multiplying $\varphi(z)$ into Equation \eqref{Eq3.5} and subtracting from Equation \eqref{Eq3.6} we get
\begin{equation} \label{Eq3.8}
     (V^3-mI_E) (z-\varphi(z))v - (1+l) (\varphi^2)'(z)(\varphi^2(z)-\varphi(z))V^2 (v)=0.
\end{equation}
Again multiplying $(\varphi(z))^2$ into Equation \eqref{Eq3.5} and substracting from Equation \eqref{Eq3.7} we get,
\begin{equation} \label{Eq3.9}
     (V^3-mI_E) (z^2-(\varphi(z))^2)v - (1+l) (\varphi^2)'(z)((\varphi^2(z))^2-(\varphi(z))^2)V^2 (v)=0.
\end{equation}
Rewriting Equations \eqref{Eq3.8} and \eqref{Eq3.9} we get, 
\begin{equation*}
     (V^3-mI_E) (z-\varphi(z))v = (1+l) (\varphi^2)'(z)(\varphi^2(z)-\varphi(z))V^2 (v).
\end{equation*}
Again multiplying $(\varphi(z))^2$ into Equation \eqref{Eq3.5} and substracting from Equation \eqref{Eq3.7} we get,
\begin{equation*}
     (V^3-mI_E) (z-\varphi(z)) v = (1+l) (\varphi^2)'(z) \left(\frac{(\varphi^2(z))^2-(\varphi(z))^2}{ z+\varphi(z)} \right) V^2 (v).
\end{equation*}
Solving these two Equations, we get 
\begin{equation*}
    (1+l) (\varphi^2)'(z)[(\varphi^2(z)-\varphi(z))(\varphi^2(z)+\varphi(z))- (z+\varphi(z))(\varphi^2(z)-\varphi(z))]V^2 (v)=0.
\end{equation*} 
\begin{equation*}
    \implies (1+l) (\varphi^2)'(z)(\varphi^2(z)-\varphi(z))[\varphi^2(z)- z] V^2 (v)=0.
\end{equation*}
Now since $\varphi(z) \neq z, \varphi^2(z)\neq z, (\varphi^2)'(z) \neq 0$ because of the form of $\varphi$ and $V$ is an isometry, so $1+l=0$. 

Again using $1+l=0$ in Equations \eqref{Eq3.5} and \eqref{Eq3.6} we get, 
\begin{equation} \label{Eq3.10}
     (V^3-mI_E) v + (l+m) \varphi'(z)V(v) =0.
\end{equation}
\begin{equation} \label{Eq3.11}
     (V^3-mI_E) zv + (l+m) (\varphi(z)\varphi'(z)) V(v) = 0.
\end{equation}
Multiplying $z$ into Equation \eqref{Eq3.10} and substracting from Equation \eqref{Eq3.11} we get, 
\begin{equation}
 (l+m) \varphi'(z)(\varphi(z)-z) V(v) = 0.   
\end{equation}
Now since $\varphi(z) \neq z, \varphi'(z) \neq 0$ because of the form of $\varphi$ and $V$ is an isometry, so $l+m=0$.

This gives $l=-1, m=1$, so $\lambda_1, \lambda_2$ are the cube roots of unity. Moreover, $V^3=I_E$.
        
This completes the proof of the Theorem.
\end{proof}

Now we let $\mathcal{C}= \{ P_0, P_1, P_2\}$ be a family of $GTPs$ on $\mathcal{B}_{\ast}(\mathbb{D},E)$. The form of the surjective isometries on $\mathcal{B}_{\ast}(\mathbb{D}, E)$ given in \ref{Bstar-isom} implies that $P_i$ leaves invariant the subspace of all constant functions and also $\mathcal{B}_0(\mathbb{D}, E)$ for every $i=0,1,2$. Thus, the restriction of $P_i$ on $\mathcal{B}_0(\mathbb{D}, E)$ is a generalized tri-circular projection on $\mathcal{B}_0(\mathbb{D}, E)$. Applying Theorem \ref{main2} we get the form of $GTPs$ on $\mathcal{B}_{\ast}(\mathbb{D},E)$. Hence, we have the following Theorem: 

\begin{thm} \label{main4}
Let $\mathcal{C}= \{ P_0, P_1, P_2\}$ be a family of projections on
$\mathcal{B}_{\ast}(\mathbb{D}, E)$. Then $\mathcal{C}$ is a family of generalized tri-circular projections if and only if one of the following holds: 
\begin{enumerate}
\item $\varphi(z)=z$. In this case, $P_if(z) = P_i^{E}(f(z)), i=0,1,2$, and the collection $\mathcal{C}_E =\{ P_0^{E}, P_1^{E}, P_2^{E}\}$ is a family of $GTPs$ on $E$.
\item $\varphi(z)\neq z, \varphi^2(z)=z$. In this case, one of the members from $\mathcal{C}$ becomes a generalized bi-circular projection.
\item $\varphi(z) \neq z, \varphi^2(z)\neq z$ and $\varphi^3(z) = z$. Then $\lambda_1$, $\lambda_2$ are cube roots of unity and $U^3=I_E=V^3$.
\end{enumerate}
\end{thm}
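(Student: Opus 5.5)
We will follow the template of Theorem \ref{main3}. The extra ingredient is the decomposition $B_*(\mathbb{D},E)\cong B_0(\mathbb{D},E)\oplus_1 E$.

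\textbf{Setting up the splitting.} Let $\mathcal{C}$ be a family of $GTPs$ with isometry $T=P_0+\lambda_1P_1+\lambda_2P_2$. By \eqref{Bstar-isom},
\[
Tf = U(f(0)) + V\big[f\circ\varphi - f(\varphi(0))\big].
\]
Hence $T$ maps the constants onto the constants, acting there as $U$. It also maps $B_0(\mathbb{D},E)$ onto itself, acting there as the isometry $T_0$ of \eqref{B0-isom} with data $(V,\varphi)$. So $T=U\oplus T_0$ with respect to $B_*=E\oplus_1 B_0$. By Lemma \ref{lem:factorization}, each $P_i$ is a polynomial in $T$, so each $P_i$ also splits as $P_i^{E}\oplus P_i^{0}$. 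The cubic identity \eqref{MainIdentity} holds separately for $U$ on $E$ and for $T_0$ on $B_0(\mathbb{D},E)$.

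\textbf{Forward direction.} We apply the computation of Theorem \ref{main3} to $T_0$; this is the theorem actually needed, despite the text's reference to Theorem \ref{main2}. It gives the three cases for $\varphi$.
\begin{itemize}
\item[(i)] If $\varphi(z)=z$, then $T_0f=V(f-f(0))$, and $V$ satisfies the cubic, so $V$ yields $GTPs$ on $E$.
\item[(ii)] If $\varphi$ has order $2$, the Case (II) argument gives $(1+\lambda_1)(1+\lambda_2)(\lambda_1+\lambda_2)=0$, so one member is a $GBP$.
\item[(iii)] If $\varphi$ has order $3$, we get $1+l=0$ and $l+m=0$, so $\lambda_1,\lambda_2$ are the primitive cube roots of unity. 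The cubic becomes $x^3-1$, giving $V^3=I_E$. Applying the same cubic to $U$ on the constants then gives $U^3=I_E$.
\end{itemize}
For (i) we want $P_if(z)=P_i^E(f(z))$ uniformly. In this case we must also show $U=V$, or at least that the constant part and the $B_0$ part carry the same family. The natural approach is to apply Lemma \ref{lem:factorization} to $U$ and to $V$ separately and state the result componentwise; this is the point where the statement as written may need $U$ and $V$ to be treated jointly.

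\textbf{Converse.} Given data of one of the three types, we define $T$ by \eqref{Bstar-isom}. It is a surjective isometry by the Botelho--Jamison classification. Its two components satisfy the cubic: in case (iii) this holds because $U^3=V^3=I$ and $\varphi^3=\mathrm{id}$ give $T^3=I$, which equals the cubic when $\lambda_1,\lambda_2$ are the cube roots of unity. We then define the $P_i$ by the formulas of Lemma \ref{lem:factorization}, which yields a family of $GTPs$, provided the $P_i$ are nonzero and distinct.

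\textbf{Main obstacle.} The hardest step is the converse, together with the bookkeeping of the two components. We must check that each $P_i$ is nonzero, i.e. that $1,\lambda_1,\lambda_2$ are all eigenvalues of $T$. In case (iii) we would exhibit eigenfunctions using the $\varphi$-twisted averages $\sum_k \lambda^{-k}T^kf$. In case (ii) it must be made precise that the family degenerates to a $GBP$ in some member.
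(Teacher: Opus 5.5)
Your route is the paper's: split $\mathcal{B}_\ast(\mathbb{D},E)=E\oplus_1\mathcal{B}_0(\mathbb{D},E)$, write $T=U\oplus T_0$, and run the computations of Theorem~\ref{main3} on $T_0$. For (ii) and (iii) of the forward direction this is sound, and it is tighter than the paper's three-line sketch in three ways:
you cite the right theorem; you use only the cubic identity for $T_0$ (the paper's claim that $\{P_i|_{\mathcal{B}_0}\}$ is itself a GTP family can fail, see below); and you get $U^3=I_E$ from the cubic on the constants, which the paper never derives. The GBP conclusion in (ii) also needs one line: if, say, $\lambda_2=-\lambda_1$, then $T^2=P_0+\lambda_1^2(I-P_0)$ with $\lambda_1^2\neq 1$, and the cases $\lambda_1=-1$, $\lambda_2=-1$ are analogous.

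The genuine gap is case (i). You flagged the right spot, but it cannot be closed, because the assertion there is false: you cannot show $U=V$, nor that the two components carry the same family. Take $E=\mathbb{C}^2$, $\varphi(z)=z$, complementary rank-one orthogonal projections $Q_0,Q_1$, $V=Q_0+\lambda_1Q_1$ and $U=\lambda_2I_E$. Then $Tf=\lambda_2f(0)+V(f-f(0))$ is a surjective isometry of $\mathcal{B}_\ast(\mathbb{D},E)$ satisfying the cubic. Lemma~\ref{lem:factorization} gives $P_0f=Q_0(f-f(0))$, $P_1f=Q_1(f-f(0))$ and $P_2f=f(0)$, which are nonzero, distinct, and form a GTP family. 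Yet:
no $P_i$ has the form $f\mapsto R\circ f$, since testing on constants forces $R=I_E$ for $P_2$ and $R=0$ for $P_0,P_1$, both incompatible with $f(z)=zv$;
since $\dim E=2$, $E$ carries no GTP family at all;
and $P_2|_{\mathcal{B}_0}=0$.
In particular your sentence ``$V$ yields GTPs on $E$'' fails, because $V$ only has the eigenvalues $1,\lambda_1$.

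What your splitting does prove in case (i) is the componentwise form $P_if=P_i^U(f(0))+P_i^V(f-f(0))$. Here $P_i^U,P_i^V$ are the polynomials of Lemma~\ref{lem:factorization} evaluated at $U$ and $V$; either may be zero, but never both for the same $i$. The uniform formula $P_if=P_i^E\circ f$ holds exactly when $P_i^U=P_i^V$ for all $i$, e.g.\ when $U=V$.

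The converse is likewise not provable as stated, and the paper does not attempt it. The conditions refer to $\varphi,U,V,\lambda_1,\lambda_2$, which exist only once $T$ does, and (ii) supplies no data from which to build $T$. Your plan does work in two situations:
in case (iii), once nonzeroness is checked: $T_0$ satisfies no nonzero polynomial of degree $\le 2$, by the same Vandermonde argument with $f'=v,zv,z^2v$ at a non-fixed point $z$, so all three spectral projections of $T$ are nonzero;
in case (i), when $U=V$ is an isometry of $E$ whose spectral projections form a GTP family.
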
 

\subsection{Conflict of interest} There are no conflicts of interest in this paper.
%%%%%%%%%%%%%%%%%%%%%%%%%%%%%%%%%%%%%%%%%%%%%%%%%%
\bibliographystyle{MyOwn}
\bibliography{Citation}

\begin{thebibliography}{10}

\bibitem{abu2016generalized}
A.B.A. Baker.
\newblock Generalized 3-circular projections for unitary congruence invariant
  norms.
\newblock Banach J. Math. Anal., {\textbf{10}} (2016), no.~3, pages 451--465.

\bibitem{abubaker2016structures}
A.B.A. Baker and S.~Dutta.
\newblock Structures of generalized 3-circular projections for symmetric norms.
\newblock Proc. Indian Acad. Sci. Math. Sci., {\textbf{126}} (2016), pages
  241--252.

\bibitem{behrends2006m}
E.~Behrends.
\newblock {M}-structure and the {B}anach-{S}tone theorem.
\newblock Springer.

\bibitem{botelho2009generalized}
F.~Botelho and J.~Jamison.
\newblock Generalized bi-circular projections on spaces of analytic functions.
\newblock Acta Sci. Math. (Szeged), {\textbf{75}} (2009), no.~3, page 527.

\bibitem{botelho2014isometries}
F.~Botelho and J.~Jamison.
\newblock Isometries on the vector valued little {B}loch space.
\newblock IIllinois J. Math., {\textbf{58}} (2014), no.~3, pages 629--646.

\bibitem{cambern1990multipliers}
M.~Cambern and K.~Jarosz.
\newblock Multipliers and isometries in ${H}^\infty({E})$.
\newblock Bull. Lond. Math. Soc., {\textbf{22}} (1990), no.~5, pages 463--466.

\bibitem{vcuka2016generalized}
J.~{\v{C}}uka and D.~Ili{\v{s}}evi{\'c}.
\newblock Generalized tricircular projections on minimal norm ideals in
  ${B}({H})$.
\newblock J. Math. Anal. Appl., {\textbf{434}} (2016), no.~2, pages 1813--1825.

\bibitem{fovsner2007g}
M.~Fo{\v{s}}ner, D.~Ili{\v{s}}evi{\'c}, and C.K. Li.
\newblock G-invariant norms and bicircular projections.
\newblock Linear Algebra Appl., {\textbf{420}} (2007), no.~2-3, pages 596--608.

\bibitem{hosseini2022note}
M.~Hosseini.
\newblock A note on generalized tricircular projections on spaces of bounded
  variation functions.
\newblock Complex Anal. Oper. Theory, {\textbf{16}} (2022), no.~4, page~57.

\bibitem{hosseini2021projections}
M.~Hosseini and A.~Jim{\'e}nez-Vargas.
\newblock Projections in the convex hull of three isometries on absolutely
  continuous function spaces.
\newblock Bull. Malays. Math. Sci. Soc., {\textbf{44}} (2021), no.~5, pages
  3489--3509.

\bibitem{ilivsevic2022generalized}
D.~Ili{\v{s}}evi{\'c}, C.K. Li, and E.~Poon.
\newblock Generalized circular projections.
\newblock J. Math. Anal. Appl., {\textbf{515}} (2022), no.~1, page 126378.

\bibitem{lin1990isometries}
P.K. Lin.
\newblock The isometries of ${H}^\infty({E})$.
\newblock Pacific J. Math., {\textbf{143}} (1990), pages 69--77.

\bibitem{lin1991isometries}
P.K. Lin.
\newblock The isometries of ${H}^p({K})$.
\newblock J. Aust. Math. Soc., {\textbf{50}} (1991), no.~1, pages 23--33.

\bibitem{lin2008generalized}
P.K. Lin.
\newblock Generalized bi-circular projections.
\newblock J. Math. Anal. Appl., {\textbf{340}} (2008), no.~1, pages 1--4.

\bibitem{maurya2024automorphisms}
R.~Maurya, J.~Sarkar, and A.~Sensarma.
\newblock Automorphisms and generalized projections on spaces of analytic
  functions.
\newblock J. Math. Anal. Appl., {\textbf{530}} (2024), no.~2, page 127698.

\end{thebibliography}

\end{document}